\definecolor{webgreen}{rgb}{0,.5,0}
\definecolor{webbrown}{rgb}{.6,0,0}
\title{\fontsize{19pt}{19pt}\selectfont On Quasisymmetric Functions with Two Bordering Variables}
\author{
  Andrey Boris Khesin\\
  \fontsize{10pt}{10pt}\selectfont\texttt{khesin@mit.edu}
  \and
  Alexander Lu Zhang\\
  \fontsize{10pt}{10pt}\selectfont\texttt{azhang896@gmail.com}
}
\date{January 7, 2021}
\theoremstyle{plain}
\newtheorem{theorem}{Theorem}[section]
\theoremstyle{definition}
\newtheorem{definition}[theorem]{Definition}
\newtheorem{example}[theorem]{Example}
\theoremstyle{remark}
\newtheorem{question}[theorem]{Question}
\begin{document}
\maketitle

\begin{abstract}
We study a family of formal power series $K_{n, \Lambda}$, parameterized by $n$ and $\Lambda \subseteq [n]$, that largely resemble quasisymmetric functions.  This family was conjectured to have the property that the product $K_{n, \Lambda}K_{m, \Omega}$ of any two formal power series $K_{n, \Lambda}$ and $K_{m, \Omega}$ from the family can be expressed as a linear combination of other formal power series from the same family. In this paper, we show that this is indeed the case and that the span of the $K_{n, \Lambda}$'s forms an algebra.  We also provide techniques for examining similar families of formal power series and a formula for the product $K_{n, \Lambda}K_{m, \Omega}$ when $n=1$. 
\end{abstract}

\section{Introduction}


The concept of quasisymmetric functions was first developed by Stanley \cite{ubc77} in 1972 whose thesis presented the basic theory behind them.  In 1984, Gessel \cite{ubc35} gave the first formal definition of quasisymmetric functions and studied fundamental properties of their Hopf algebra, QSym.  The study of quasisymmetric functions has since grown in popularity due to their numerous connections to other important areas of math such as discrete geometry \cite{ubc35}.

In 2017, Gessel and Zhuang \cite{geszhuang} initiated the study of quasisymmetric functions in relation to proving shuffle-compatibility of permutation statistics by finding certain subalgebras of QSym consisting of quasisymmetric functions related to these statistics.  In 2018, Grinberg \cite{grinberg2018} proceeded to use a similar method to show that the exterior peak statistic set Epk is shuffle-compatible by finding a specific subalgebra of the ring of formal power series.  The particular subalgebra Grinberg \cite{grinberg2018} discovered is the span of a family of functions quasisymmetric in all but two variables and related to the exterior peak set of a permutation. In this paper, we generalize Grinberg's \cite{grinberg2018, grinberg2020} weaker findings by proving that the general family, with sets not limited to exterior peak sets, is also a subalgebra of the ring of formal power series.

Although the formal power series we are studying are not actually quasisymmetric, they can be easily reduced to quasisymmetric functions.  Thus, studying the structure of this subalgebra could allow for further insight into the QSym algebra and shuffle-compatibility of certain permutation statistics, generalizing the findings in \cite{grinberg2018} and \cite{grinberg2020}.  We hope that the result of this paper and the techniques we use can reveal even more about the recently discovered connection between the two aforementioned areas of study.

Section~2 defines some conventions we use throughout the paper and introduces the main theorem that we prove in Section~3.
Section~4 provides a formula for a specific case of the problem.

\section{Preliminary information and main theorem}

This section introduces many of the important definitions we use throughout this paper. We mostly follow the notations established by Grinberg in \cite{grinberg2020}.
This section also states the main theorem of this paper.

Firstly, we let $\mathbb N$ denote the set $\{0, 1, 2, 3, \ldots\}$ of nonnegative integers, which we consider under its natural ordering $0 \prec 1 \prec 2 \prec 3 \prec \cdots$.  We similarly let the set of \textit{natural numbers,} denoted by \text{$\mathbb  N_+$}, be $\{1,2,3, \ldots \}$ and consider it under its natural ordering.  
 
We now extend the natural numbers to include a $0$ term and an $\infty$ term, which we use in the definition of our particular family of formal power series.

\begin{definition}
\label{curlyN}
Let the set of \textit{extended natural numbers}, denoted by $\mathcal{N}$, be the totally ordered set $\{0,1,2,\ldots\} \cup \{\infty\}$, with total order given by $0\prec1\prec2\prec \cdots \prec\infty$.  In essence, $\mathcal{N}$ is $\text{$\mathbb  N_+$}\cup \{0, \infty \}$ with a 0 term
smaller than all terms of $\mathbb  N_+$ and an $\infty$ term larger than all terms of \text{$\mathbb  N_+$}.
\end{definition}

Our family of formal power series is made up of the variables $x_i$, where $i \in \mathcal{N}$.  We now define some notation for a ring of formal power series in these variables.

\begin{definition}\label{fps}
Let $\mathbb{Z}$ denote the ring of integers.  Let $\mathbb{Z} [[ x_0, x_1, x_2, \ldots, x_{\infty}]]$ be the \textit{ring of formal power series} in variables $x_0, x_1, x_2, \ldots, x_{\infty}$. Elements of $\mathbb{Z} [[ x_0, x_1, x_2, \ldots, x_{\infty}]]$ are \textit{formal power series}, potentially infinite $\mathbb Z$-linear combinations of monomials of the form $x_0^{\alpha_0}x_1^{\alpha_1}x_2^{\alpha_2} \cdots x_{\infty}^{\alpha_{\infty}}$ for nonnegative integers $\alpha_0, \alpha_1, \alpha_2, \ldots, \alpha_{\infty}$ (with all but finitely many of $\alpha_0, \alpha_1, \alpha_2, \ldots, \alpha_{\infty}$ being equal to 0); we let the degree of such a monomial be equal to $\alpha_0+ \alpha_1+ \alpha_2+ \cdots+ \alpha_{\infty}$. 
\end{definition}

Throughout this paper, we often write monomials in the form $x_{g_1}x_{g_2}\cdots x_{g_n}$, where $n$ is the degree of the monomial and $g_1, g_2, \ldots, g_n \in \mathcal N$.  We adopt the convention of choosing the order of the subscripts to satisfy the ordering $g_1 \preceq g_2 \preceq \cdots \preceq g_n$.  For example, if we write $x_0^2x_5^3$ as such, then $(g_1, g_2, g_3, g_4, g_5) = (0,0,5,5,5)$.  

To simplify notation in future sections, we provide a distinction between different $x_i$'s.
\begin{definition}\label{diffvars}
We define the \textit{bordering variables} to be $x_0$ and $x_{\infty}$ and the \textit{natural variables} to be $x_i$ for all $i \in \mathbb  N_+$.
\end{definition}

We now define quasisymmetric functions, from whose study our problem arises.

\begin{definition}\label{qsymfunc}
A formal power series $f$ in variables $x_1, x_2, \ldots$ is a \textit{quasisymmetric function} if it has bounded degree and for all $k \in \mathbb  N_+$, any positive integer $k$-tuple of exponents $\alpha_1, \alpha_2, \ldots, \alpha_k$ satisfies the property that the coefficient of the monomial term $x_{i_1}^{\alpha_1}x_{i_2}^{\alpha_2}\cdots x_{i_k}^{\alpha_k}$ in $f$ is equal to that of the monomial term  $x_{j_1}^{\alpha_1}x_{j_2}^{\alpha_2}\cdots x_{j_k}^{\alpha_k}$ in $f$ for any strictly increasing sequences of positive integers $i_1<i_2<\cdots<i_k$ and $j_1<j_2<\cdots<j_k$.
\end{definition}

The functions we study are quasisymmetric in the natural variables $x_1, x_2, \ldots$; i.e., the $x_i$ for $i \in \mathbb  N_+$. Setting $x_0=x_{\infty}=0$ thus results in our functions becoming quasisymmetric.

We are now ready to define the family of formal power series we study for the rest of the paper.
Each of these functions has two parameters, a nonnegative integer $n$ and
a set $\Lambda \subseteq [n]$, where $[n] = \{1, 2, \ldots, n\}$.

\begin{definition}\label{KnLam}
Let $n$ be a nonnegative integer.  If $\Lambda$ is any subset of $[n]$, we define a power series $K_{n, \Lambda}$ as follows:

\begin{equation}\label{KnLeq}
    K_{n, \Lambda}=\hspace{-2 em}\sum\limits_{\substack{(g_1,g_2,\ldots,g_n) \in \mathcal{N}^n;\\0 \preceq g_1 \preceq g_2 \preceq \cdots \preceq g_n \preceq \infty;\\ \text{no } i \in \Lambda \text{ satisfies } g_{i-1}=g_i=g_{i+1}\\(\text{where } g_0=0\text{ and } g_{n+1}=\infty)}}\hspace{-2 em} 2^{|\{g_1,g_2,\ldots,g_n\} \cap \mathbb  N_+|}x_{g_1}x_{g_2}\cdots x_{g_n}.
\end{equation}

\end{definition}



In essence, we sum over all combinations with replacement of $n$ nondecreasing elements $g_1, g_2, \ldots, g_n$ from the set $\mathcal{N}$ satisfying the requirement that there does not exist an $i \in \Lambda$ such that $g_{i-1}=g_i=g_{i+1}$, where $g_0=0$ and $g_{n+1}=\infty$.
For each of these summands, we multiply the monomial $x_{g_1}x_{g_2}\cdots x_{g_n}$ by 2 to the power of the number of distinct natural variables in the set $\{x_{g_1}, x_{g_2}, \ldots, x_{g_n}\}$.  Note that $K_{n, \Lambda}$ is homogeneous of degree $n$.
This family of formal power series defined by (\ref{KnLeq}) is denoted by $K_{n,\Lambda}^{\mathcal Z}$ in \cite{grinberg2018} and \cite{grinberg2020}, but we omit the $\mathcal Z$ notation for convenience.

\begin{example}
Let $n=2$ and $\Lambda = \{1\}$.  Since $g_0=0$, $g_1$, and $g_2$ cannot all be equal, the only restriction is that $g_1$ and $g_2$ cannot equal 0 simultaneously.  Thus, 
\begin{equation}
    K_{2, \{1\}}=\sum\limits_{i \in \text{$\mathbb  N_+$}}2x_0x_i+\sum\limits_{i \in \text{$\mathbb  N_+$}}2x_ix_{\infty}+\sum\limits_{i \in \text{$\mathbb  N_+$}}2x_i^2+\sum\limits_{i<j \in \text{$\mathbb  N_+$}}4x_ix_j+x_{\infty}^2,
\end{equation}
where the coefficient of each monomial $x_{g_1}x_{g_2}\cdots x_{g_n}$ equals 2 to the power of the number of distinct natural variables in the monomial.
\end{example}

At this point, we are ready to present the main result of this paper, a theorem that resolves a conjecture by Grinberg presented as Question 2.51 in \cite{grinberg2018} and Question 4.7 in \cite{grinberg2020} (as of the time of writing).

\begin{theorem}[Main]\label{main}
The span of our family of power series, $\operatorname{span} \left (K_{n, \Lambda} \right )_{n \in \mathbb  N; \text{ } \Lambda \subseteq [n]}$, is a $\mathbb Z$-subalgebra of $\mathbb{Z} [[ x_0, x_1, x_2, \ldots, x_{\infty}]]$.  Equivalently, for any $\Lambda \subseteq [n]$ and any $\Omega \subseteq [m]$, the product $K_{n, \Lambda}K_{m, \Omega}$ is a $\mathbb{Z}$-linear combination from the set $\{K_{n+m,\Xi}\}_{\Xi \subseteq [n+m]}$.
\end{theorem}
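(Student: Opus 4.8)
The natural approach is to interpret $K_{n,\Lambda}$ combinatorially so that the product becomes a sum over interleavings, and then re-express that sum in the basis $\{K_{n+m,\Xi}\}$. Let me think carefully about the structure.

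First I would set up the combinatorial model. A monomial $x_{g_1}\cdots x_{g_n}$ with $g_1 \preceq \cdots \preceq g_n$ in $\mathcal{N}^n$ corresponds to a weakly increasing word. The coefficient $2^{|\{g_1,\ldots,g_n\}\cap \mathbb{N}_+|}$ counts the number of ways to "decorate" or "color" the distinct natural-variable values appearing — think of replacing each distinct natural value by a bit (a binary choice). So $K_{n,\Lambda}$ is a generating function over pairs (weakly increasing word in $\mathcal{N}^n$ avoiding the forbidden pattern $g_{i-1}=g_i=g_{i+1}$ for $i\in\Lambda$, plus a $\{0,1\}$-labeling of the distinct $\mathbb{N}_+$-values).

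Now for the product $K_{n,\Lambda}K_{m,\Omega}$: a term is a product of two such decorated words. To express it as a sum of $K$'s of degree $n+m$, I want to shuffle the two words into a single weakly increasing word of length $n+m$, keeping track of which positions came from the first factor and which from the second. The bordering variables are the subtlety: $g_0=0$ and $g_{n+1}=\infty$ act as "virtual" first/last letters. When we merge, the $x_0$'s from both words coalesce at the front and the $x_\infty$'s coalesce at the back; the internal $0$ and $\infty$ "sentinels" of the two factors now become genuine variables that may collide with real $x_0$'s and $x_\infty$'s of the other factor, or with each other. This is exactly the phenomenon that makes the span closed but not obviously so.

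The key step — and the main obstacle — is the combinatorial identity that rewrites a sum over all interleavings/merges of the two forbidden-pattern-avoiding words into a $\mathbb{Z}$-linear combination of single forbidden-pattern conditions indexed by subsets $\Xi\subseteq[n+m]$. I expect one must: (1) fix a shuffle type (a choice of which of the $n+m$ slots are "first-factor" slots, i.e. an element of a set of size $\binom{n+m}{n}$), possibly with some slots identified when a first-factor letter equals a second-factor letter; (2) for each shuffle type, determine the induced forbidden positions — these come from the original $\Lambda$ and $\Omega$ shifted into their new positions, plus new forbidden coincidences at the "seams" where one word's $0$-sentinel or $\infty$-sentinel meets the other word's real letters; (3) observe that the resulting indicator (a product/union of "not all three equal" conditions, together with equalities forced by the shuffle type) does not directly match a single $K_{n+m,\Xi}$, so one needs inclusion–exclusion over the $\Xi$'s. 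The cleanest route is probably to prove a "triple-equality" recursion: show that $K_{n,\Lambda}$ for $\Lambda$ and $\Lambda\cup\{i\}$ differ by a correction term that is itself (up to reindexing, with a merged variable) a $K$ of smaller degree, and then induct. In other words, establish a short exact-sequence-type relation $K_{n,\Lambda\setminus\{i\}} - K_{n,\Lambda} = (\text{sum of terms with }g_{i-1}=g_i=g_{i+1})$, and recognize the right side, after collapsing the three equal slots to one, as built from lower $K$'s; combined with the base case (where $\Lambda=\Omega=\emptyset$, so we are essentially in honest QSym-with-two-extra-variables territory and the product rule is a routine shuffle computation), a double induction on $|\Lambda|+|\Omega|$ then finishes the argument.

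To make this rigorous I would (a) first handle $\Lambda=\Omega=\emptyset$ directly, writing $K_{n,\emptyset}K_{m,\emptyset}$ as an explicit sum over merges with the bordering-variable bookkeeping made precise, and check it lands in the span; (b) prove the deletion–contraction lemma relating $K_{n,\Lambda}$ to $K_{n,\Lambda\cup\{i\}}$ plus a contracted lower-degree term; (c) run the induction, taking care that contraction of a forbidden triple near the boundary $i\in\{1,n\}$ interacts correctly with the sentinels $g_0=0$, $g_{n+1}=\infty$. The place I expect the real work to concentrate is (b)–(c): controlling how contracting a triple-equality at position $i$ in a degree-$(n+m)$ word interacts with the shuffle structure and with the other factor's forbidden positions, and verifying that the coefficients that arise are integers (which they must be, but the inclusion–exclusion could a priori produce denominators if set up carelessly). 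Everything else is bookkeeping with weakly increasing words and binary decorations.
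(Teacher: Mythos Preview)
Your plan has a genuine gap at step (b), the deletion--contraction lemma. Write out what happens when you impose $g_{i-1}=g_i=g_{i+1}$ and then collapse the three equal slots to one. If some $j\in\Lambda$ is adjacent to $i$ (say $j=i-1$), then the original condition ``not $g_{i-2}=g_{i-1}=g_i$'' becomes, after collapse, ``not $h_{i-2}=h_{i-1}$'': a \emph{pair} inequality, not a triple one. The family $\{K_{n-2,\Lambda'}\}$ only encodes triple-inequality constraints, so the contracted object is not in its span. Moreover, even before collapsing, the degree does not drop: the difference $K_{n,\Lambda\setminus\{i\}}-K_{n,\Lambda}$ is a degree-$n$ series (it is the sum over sequences satisfying the ``each'' condition at $i$ and the ``no'' conditions on $\Lambda\setminus\{i\}$), and if you rewrite it in the $K$-basis via inclusion--exclusion the subsets $\Lambda'$ that appear are in general \emph{larger} than $\Lambda$, so your induction on $|\Lambda|+|\Omega|$ does not close either way.

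The base case is also not as innocent as you suggest. For $\Lambda=\Omega=\emptyset$ the product $K_{n,\emptyset}K_{m,\emptyset}$ already requires the special role of the base $2$: the paper observes that if $2$ is replaced by any other $q\neq 0$ the span fails to be closed even for $K_{1,\emptyset}^2$, so ``routine shuffle computation in QSym with two extra variables'' cannot be the whole story. Whatever mechanism handles the general case must already be present here.

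The paper sidesteps both issues by passing, via M\"obius/inclusion--exclusion, to the dual family
\[
L_{n,\Lambda}=\sum_{\substack{g:\ \text{\emph{each} }i\in\Lambda\text{ has }g_{i-1}=g_i=g_{i+1}}}2^{|\{g\}\cap\mathbb N_+|}x_{g_1}\cdots x_{g_n},
\]
so that $\operatorname{span}(K)=\operatorname{span}(L)$, and then proves closure for the $L$'s directly. The argument is not inductive: one starts with $T=L_{n,\Lambda}L_{m,\Omega}$ and greedily subtracts integer multiples of $L_{n+m,\Xi}$, processing the $\Xi\subseteq[n+m]$ in increasing order of $|\Xi|$, each time killing the coefficients of the ``generic'' ($L$-special) monomials attached to $\Xi$. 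Two points carry the weight: (i) integrality---the coefficient of any $L$-special monomial in $T$ is always divisible by $2^{|\{g\}\cap\mathbb N_+|}$, so the multiple subtracted is an integer; and (ii) a \emph{spreading condition}---both $L_{n,\Lambda}L_{m,\Omega}$ and every $L_{n+m,\Xi}$ have the property that resolving a ``problematic'' equality (an $x_i^2$ with $i\in\mathbb N_+$, or a lone $x_0$ or $x_\infty$) exactly doubles the coefficient, so once all $L$-special coefficients vanish the non-$L$-special ones vanish automatically. Your decoration idea (interpreting $2^{|\cdot|}$ as a binary choice per distinct natural value) is the right intuition behind (i), but you need a replacement for (ii) to make any scheme work; the contraction recursion you propose does not supply one.
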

Weaker versions of this theorem are proven in \cite{grinberg2018} for sets restricted to exterior peak sets of permutations and \cite{grinberg2020} for $x_0=x_{\infty}=0$.  

We will show later that the base power of 2 in the definition of $K_{n, \Lambda}$ is actually necessary for Theorem \ref{main} to hold true; that is, that the span of the $K_{n, \Lambda}$'s would not form an algebra if the $2^{|\{g_1,g_2,\ldots,g_n\} \cap \mathbb  N_+|}$ coefficient in $K_{n, \Lambda}$ were instead replaced with $q^{|\{g_1,g_2,\ldots,g_n\} \cap \mathbb  N_+|}$ for any other nonzero real number $q$.  

For our proof of Theorem \ref{main}, we define a new family of formal power series $L_{n, \Lambda}$, first introduced by Grinberg in \cite{grinberg2020}, whose definition is very similar to that of $K_{n, \Lambda}$.

\begin{definition}\label{LnLam}
Let $n$ be a nonnegative integer.  If $\Lambda$ is any subset of $[n]$, we define a power series $L_{n, \Lambda}$ as follows:

\begin{equation}
    L_{n, \Lambda}=\hspace{-2 em}\sum\limits_{\substack{(g_1,g_2,\ldots,g_n) \in \mathcal{N}^n;\\0 \preceq g_1 \preceq g_2 \preceq \cdots \preceq g_n \preceq \infty;\\ \textbf{each } i \in \Lambda \text{ satisfies } g_{i-1}=g_i=g_{i+1}\\(\text{where } g_0=0\text{ and } g_{n+1}=\infty)}}\hspace{-2 em} 2^{|\{g_1,g_2,\ldots,g_n\} \cap \mathbb  N_+|}x_{g_1}x_{g_2}\cdots x_{g_n}.
\end{equation}
\end{definition}

Note the distinction between the definitions of $K_{n, \Lambda}$ and $L_{n, \Lambda}$: the bold text below the summation symbol reads ``each'' for $L_{n, \Lambda}$ instead of ``no'' as for $K_{n, \Lambda}$.  

\begin{example}
Let $n=3$ and $\Lambda = \{1\}$.  In this case, $g_0=0$, $g_1$, and $g_2$ must all be equal, so $g_1=g_2=0$,
and there are no restrictions on $g_3$.  Thus, 
\begin{equation}
    L_{3, \{1 \}} = x_0^2\left( x_0+\sum\limits_{i \in \mathbb  N_+} 2x_i + x_{\infty} \right) = x_0^3+\sum\limits_{i \in \mathbb  N_+} 2x_0^2x_i+x_0^2x_{\infty}.
\end{equation}
\end{example}

Two other examples of $L_{n, \Lambda}$'s are $L_{n, [n]}$, which equals 1 for $n=0$ and 0 for all $n>0$ (since we cannot have $0 = \infty$), and $L_{n, \{ \}}$, which equals $K_{n, \{ \}}$ (since there are no restrictions on the variables of either series).

We have now presented our main theorem and defined a new family of formal power series, $L_{n, \Lambda}$.  There exist elegant relations between this family and the original family, $K_{n, \Lambda}$, that we can now use in our proof of the main theorem.

\section{Proof of main theorem}

This section is devoted to the proof of our main result, Theorem \ref{main}, along with a brief proof that the specific base of 2 in the definition of the $K_{n, \Lambda}$'s is necessary for their span to form an algebra. There exists a mutual relation between $K_{n, \Lambda}$ and $L_{n, \Lambda}$ that allows us to reformulate Theorem \ref{main} into a different theorem involving $L_{n, \Lambda}$.

Note that $K_{n, \Lambda}$ contains all monomial terms such that $g_{i-1}=g_i=g_{i+1}$ is false for all $i \in \Lambda$ and $L_{n, \Lambda}$ contains all terms such that $g_{i-1}=g_i=g_{i+1}$ is true for all $i \in \Lambda$. Thus, the inclusion-exclusion principle gives us the following relations between $K_{n, \Lambda}$ and $L_{n, \Lambda}$:
\begin{equation}\label{pie}
    K_{n, \Lambda} = \sum\limits_{\Omega \subseteq \Lambda}(-1)^{|\Omega|}L_{n, \Omega}
\end{equation}
and 
\begin{equation}\label{pie2}
    L_{n, \Lambda} = \sum\limits_{\Omega \subseteq \Lambda}(-1)^{|\Omega|}K_{n, \Omega}.
\end{equation}

Note that we have $\operatorname{span} \left (K_{n, \Lambda} \right )_{n \in \mathbb  N; \text{ } \Lambda \subseteq [n]} \subseteq \operatorname{span} \left (L_{n, \Lambda} \right )_{n \in \mathbb  N; \text{ } \Lambda \subseteq [n]}$ because using (\ref{pie}),  any $\mathbb{Z}$-linear combination from the set $\{K_{n,\Lambda}\}_{\Lambda \subseteq [n]}$ 
can be written in terms of $L_{n, \Lambda}$'s; (\ref{pie2}) analogously implies $\operatorname{span} \left (L_{n, \Lambda} \right )_{n \in \mathbb  N; \text{ } \Lambda \subseteq [n]} \subseteq \operatorname{span} \left (K_{n, \Lambda} \right )_{n \in \mathbb  N; \text{ } \Lambda \subseteq [n]}$.  
This implies $\operatorname{span} \left (K_{n, \Lambda} \right )_{n \in \mathbb  N; \text{ } \Lambda \subseteq [n]} = \operatorname{span} \left (L_{n, \Lambda} \right )_{n \in \mathbb  N; \text{ } \Lambda \subseteq [n]}$.  Therefore, proving Theorem \ref{main} is equivalent to proving the following theorem:

\begin{theorem}[Alternate Main]\label{mainl}
The span of the $L_{n, \Lambda}$'s, $\operatorname{span} \left (L_{n, \Lambda} \right )_{n \in \mathbb  N; \text{ } \Lambda \subseteq [n]}$, is a $\mathbb Z$-subalgebra of $\mathbb{Z} [[ x_0, x_1, x_2, \ldots, x_{\infty}]]$.  Equivalently, for any $\Lambda \subseteq [n]$ and any $\Omega \subseteq [m]$, the product $L_{n, \Lambda}L_{m, \Omega}$ is a $\mathbb{Z}$-linear combination from the set $\{L_{n+m,\Xi}\}_{\Xi \subseteq [n+m]}$.
\end{theorem}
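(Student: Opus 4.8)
The plan is to prove Theorem~\ref{mainl} by finding a combinatorial formula for the product $L_{n, \Lambda} L_{m, \Omega}$ directly from the definition and recognizing the result as a $\mathbb{Z}$-linear combination of $L_{n+m, \Xi}$'s. Expanding the product, a typical monomial of $L_{n, \Lambda} L_{m, \Omega}$ comes from choosing a weakly increasing tuple $(g_1 \preceq \cdots \preceq g_n)$ satisfying the ``each $i \in \Lambda$ has $g_{i-1} = g_i = g_{i+1}$'' condition (with $g_0 = 0$, $g_{n+1} = \infty$) and a weakly increasing tuple $(h_1 \preceq \cdots \preceq h_m)$ satisfying the analogous condition for $\Omega$, and then merging them. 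The key subtlety is the coefficient: $L_{n,\Lambda}$ contributes $2^{|\{g_i\} \cap \mathbb{N}_+|}$ and $L_{m,\Omega}$ contributes $2^{|\{h_j\} \cap \mathbb{N}_+|}$, so the monomial $x_{g_1} \cdots x_{g_n} x_{h_1} \cdots x_{h_m}$ gets coefficient $2^{|\{g_i\}\cap \mathbb{N}_+| + |\{h_j\}\cap\mathbb{N}_+|}$, whereas in $L_{n+m,\Xi}$ that same monomial wants coefficient $2^{|\{g_i\}\cup\{h_j\} \cap \mathbb{N}_+|}$. The discrepancy is exactly $2^{|\{g_i\}\cap\{h_j\}\cap\mathbb{N}_+|}$, i.e.\ a factor of $2$ for every natural variable that appears in \emph{both} tuples. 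This is presumably where the base $2$ is forced, and it must be handled by the merging bookkeeping.

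The natural way to organize the merge is to think of it as an interleaving (shuffle) of the two sorted sequences into one sorted sequence of length $n+m$, but with the extra freedom that equal natural-variable values from the two sides may either be kept as separate slots or identified. I would set up a shuffle-type sum: for each way of merging the index sets $[n]$ and $[m]$ into a linearly ordered set $S$ of size $n+m$ compatible with both orders, together with a choice of which ``coincidences'' of natural variables are enforced, one gets a constraint that selects precisely those $(g, h)$ pairs realizing that merge pattern; the factor $2^{|\{g_i\}\cap\{h_j\}\cap\mathbb{N}_+|}$ then gets absorbed because a coincidence of value at a natural variable can be booked in two ways (as a single shared slot contributing one factor of $2$, or — thinking of the $L$ side — this is exactly where the $2$ in the coefficient pays for itself). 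Concretely, I expect to show that $L_{n,\Lambda} L_{m,\Omega} = \sum_{\Xi} c_\Xi L_{n+m,\Xi}$ where $\Xi$ ranges over subsets of $[n+m]$ arising from overlaying $\Lambda$ and a shifted copy of $\Omega$ under all admissible merges, and the coefficients $c_\Xi$ are integers determined by counting the merges that produce $\Xi$ (with signs, if inclusion-exclusion via \eqref{pie} and \eqref{pie2} is used to clean up over-counting).

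An alternative, possibly cleaner, route: rather than computing the product of two $L$'s, use \eqref{pie} to write each $L$ as an alternating sum of $K$'s, or conversely prove the $K$-version by a generating-function / substitution argument. Since Grinberg already established the $x_0 = x_\infty = 0$ case (the genuine quasisymmetric shadow, where the product formula is the classical overlapping-shuffle product of monomial quasisymmetric functions), one approach is to track how the two bordering variables $x_0$ and $x_\infty$ propagate through that product. The bordering variables behave specially: $x_0$ can only sit at the front ($g_1$ can be $0$) and $x_\infty$ only at the back, and in the merge the two $x_0$-blocks must be concatenated and likewise the two $x_\infty$-blocks, so the $x_0$ and $x_\infty$ multiplicities just add. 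The quasisymmetric core then shuffles as usual. I would reduce the general statement to the known quasisymmetric one plus a short argument controlling the two bordering blocks, keeping careful track of the condition ``$g_{i-1} = g_i = g_{i+1}$'' near the ends (where $g_0 = 0$ or $g_{n+1} = \infty$) since these involve the bordering variables.

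The main obstacle I anticipate is the coefficient bookkeeping at coincident natural variables — showing that the factor $2^{|\{g_i\}\cap\{h_j\}\cap\mathbb{N}_+|}$ is \emph{exactly} what is needed so that the merged sum reassembles into $L_{n+m,\Xi}$'s with integer (not just rational) coefficients, and that no other base of the exponential would work. I would isolate this in a lemma about a single ``merge pattern'' contribution and verify the power-of-$2$ accounting there, probably by an explicit bijection between (a) monomials of $L_{n,\Lambda}L_{m,\Omega}$ weighted by their product coefficient and (b) a signed sum of monomials of $L_{n+m,\Xi}$'s weighted appropriately, with the two bookkeeping options for each shared natural variable matching the two factors of $2$.
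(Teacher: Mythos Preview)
Your proposal is a plan rather than a proof: you identify the central difficulty correctly (the coefficient discrepancy $2^{|\{g_i\}\cap\{h_j\}\cap\mathbb{N}_+|}$ between the product coefficient and the coefficient in a single $L_{n+m,\Xi}$), but you do not actually carry out either of the approaches you sketch.

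Your first route --- an explicit shuffle-type formula $L_{n,\Lambda}L_{m,\Omega} = \sum_\Xi c_\Xi\, L_{n+m,\Xi}$ with nonnegative $c_\Xi$ counting merge patterns --- runs into a concrete obstacle that the paper itself flags in Section~4: already $K_{2,\{\}}K_{2,\{1,2\}}$ requires a \emph{negative} coefficient in its expansion, so a purely bijective ``each merge contributes one term'' story cannot work without signs or cancellation, and you have not indicated how to organize that. Your second route (reduce to the known $x_0=x_\infty=0$ case and track the bordering blocks) is plausible but underspecified; the delicate point is the condition $g_{i-1}=g_i=g_{i+1}$ at the interface between a bordering block and the natural-variable core, and you do not show how the known quasisymmetric product survives this.

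The paper takes a genuinely different, non-constructive route. Rather than write down a formula, it argues by triangularity: process the subsets $\Xi\subseteq[n+m]$ in increasing order of $|\Xi|$, and at each step subtract an integer multiple of $L_{n+m,\Xi}$ from the running remainder $T$ so as to kill the coefficients of all ``generic'' ($L$-special) monomials attached to $\Xi$. The divisibility fact needed --- that $[\mathbf{x}_g](T)$ is always a multiple of $2^{|\{g_1,\dots,g_{n+m}\}\cap\mathbb{N}_+|}$ --- is exactly your observation about the exponent excess, used here as a divisibility statement rather than as fuel for a bijection. After all $L$-special monomials are zeroed, the remaining monomials (those with a natural variable of exponent exactly $2$, or a bordering variable of exponent exactly $1$) are handled by a separate ``spreading condition'': in both $L_{n,\Lambda}L_{m,\Omega}$ and in every $L_{n+m,\Xi}$, resolving such a problematic equality exactly doubles the coefficient, so once the $L$-special coefficients vanish, everything else vanishes automatically.

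In short, your power-of-$2$ analysis is the right kernel and the paper uses it too, but packaged as divisibility plus triangularity plus a spreading lemma, which sidesteps the combinatorial explosion a direct formula would face. To complete your approach you would have to supply the signed bijection you only allude to, and that is not a small gap.
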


To prove Theorem \ref{mainl}, we start with $L_{n, \Lambda} L_{m, \Omega}$ and ``zero out'' its coefficients by adding and subtracting integer multiples of various $L_{n+m, \Xi}$'s.  We do so by iterating through all $2^{n+m}$ subsets of $[n+m]$ in increasing order by size (the order in which we process terms of the same size does not matter, as we will see in the proof), and we add or subtract an integer multiple of each $L_{n+m, \Xi}$ in order to zero out the coefficients of certain monomial terms.  We then show that it is always possible to zero out all coefficients if this procedure is applied correctly.

The motivation for zeroing out monomial terms in this order comes from the notion of ``generic'' monomial terms in the $L_{n, \Lambda}$'s.  These are monomial terms $\mathbf{x}_g=x_{g_1}x_{g_2}\cdots x_{g_{n}}$ such that for any $i \in \{0 , 1, \ldots, n \}$, $g_i=g_{i+1}$ if and only if $i\in\Lambda$ or $i+1\in\Lambda$.  Informally, these are terms that only have = signs between $g_i$'s in the sequence $g_0\preceq g_1\preceq g_2 \preceq \cdots \preceq g_{n} \preceq g_{n+1}$ when the elements of $\Lambda$ force them to be present (and have $\prec$ signs otherwise).  However, the $L_{n, \Lambda}$'s may also contain monomial terms that aren't ``generic,'' which have = signs between $g_i$'s other than the ones stipulated by $\Lambda$, that might also be present in other $L_{n, \Omega}$'s with $\Lambda \subseteq \Omega$.  Thus, we iterate from smaller sets $\Xi$ to larger ones in order to prevent the coefficients of generic terms in the larger sets we zero out from being changed by processing smaller sets after.

However, this zeroing out does not completely solve the problem, since there are monomials that cannot be expressed as a generic term in some $L_{n, \Lambda}$.  The definition of $L_{n, \Lambda}$ requires that for every element $i \in \Lambda$, $g_{i-1}=g_i=g_{i+1}$ in every monomial term in $L_{n, \Lambda}$. For example, no generic term has $g_{i-1} \prec g_i = g_{i+1} \prec g_{i+2}$ for any $i$; note that any monomial with this last relation will contain $x_{g_i}^2$.  We thus provide a definition that allows us to distinguish between monomials that can be expressed as generic terms and those that cannot.

\begin{definition}
Let $\mathbf m$ be a monomial.  We define $\mathbf m$ to be \textit{$L$-special} if:
\begin{itemize}
\item for all $i \in \mathbb  N_+$, the exponent of $x_i$ in $\mathbf m$
is not equal to 2, and
\item for $i \in \{ 0, \infty \}$, the exponent of $x_i$ in $\mathbf
m$ is not equal to 1.
\end{itemize}
\end{definition}

Note in particular that $L$-special monomials cannot have bordering variables with an exponent of 1 because the relations $g_0 = g_1 = g_2$ and $g_{n-1} = g_n = g_{n+1}$ along with $g_0=0$ and $g_{n+1} = \infty$ require $L$-special monomials to have bordering variables with exponents either equal to 0 or at least 2.  

We now define a family of sets of $L$-special monomials.

\begin{definition}
Let $n$ be a nonnegative integer.  If $\Lambda$ is any subset of $[n]$, we define a set  $M_{n, \Lambda}$ as follows:

\begin{equation}M_{n, \Lambda}=\hspace{-2 em}\bigcup_{\substack{(g_1,g_2,\ldots,g_n) \in \mathcal{N}^n;\\ \text{\textbf{each} }g_i \prec g_{i+1} \text{ if } i,i+1 \notin \Lambda \\(\text{where } g_0=0\text{ and } g_{n+1}=\infty)\\ \text{and } g_i=g_{i+1} \text{ otherwise}}} \hspace{-2 em}\{ x_{g_1}x_{g_2}\cdots x_{g_n}\}.\end{equation}
\end{definition}

We can see that for all $i$ in some $\Lambda$, every term $x_{g_1}x_{g_2}\cdots x_{g_n}$ in $M_{n, \Lambda}$ must satisfy $g_{i-1}=g_i=g_{i+1}$ and that $g_i \prec g_{i+1}$ for all consecutive pairs $g_i$ and $g_{i+1}$ for which neither $i$ nor $i+1$ is in $\Lambda$.  This implies that $M_{n, \Lambda}$ consists, informally, of all terms that only have = signs between $g_i$'s if the elements of $\Lambda$ force the = signs to be present; note that these are exactly the ``generic terms'' in $L_{n, \Lambda}$.

Note that every monomial in any $M_{n, \Lambda}$ must be $L$-special because the definition of $M_{n, \Lambda}$ requires that $g_{i-1}=g_i=g_{i+1}$ for all $i \in \Lambda$, so there cannot be any natural variables with exponent 2 or bordering variables with exponent 1.

In addition, every $L$-special monomial can be expressed as an element of some $M_{n, \Lambda}$ by picking specific values of $i$ in $\Lambda$, so for any nonnegative integer $n$ the union of all $M_{n, \Lambda}$'s is exactly the set of all degree $n$ $L$-special monomials.  For example, the $L$-special monomial $x_0^3 x_1^3 x_4 x_6^5 x_{\infty}^2$ is an element of the set $M_{14, \{ 1, 2, 5, 9, 10, 11, 14 \}}$=$M_{14, \{ 1, 2, 5, 9, 11, 14 \}}$.  
Finally note that by Definition \ref{LnLam}, an $L_{n, \Lambda}$ contains some element of an $M_{n, \Omega}$ as a monomial term if and only if $\Lambda \subseteq \Omega $ or $L_{n, \Lambda} = L_{n, \Omega}$. 

We provide one final definition that allows for a convenient notation of the coefficients of various monomials in our process of zeroing them out.

\begin{definition}
Let $f$ be a formal power series in $\mathbb{Z} [[ x_0, x_1, x_2, \ldots, x_{\infty}]]$.  For every monomial $\mathbf m$, we denote the coefficient of $\mathbf m$ in $f$ by $[\mathbf m] (f)$.
\end{definition}
We are now ready to prove our main theorem.  Within this proof, we will additionally work through the steps of expressing the particular product $L_{2, \{1\}} L_{3, \{2\}}$ as a sum of $L_{5, \Xi}$'s in order to demonstrate this process more clearly.

\begin{proof}[Proof of Theorem \ref{mainl}]

Consider any $L$-special monomial $\mathbf{x}_g=x_{g_1}x_{g_2}\cdots x_{g_{n+m}}$ in the expansion of $L_{n, \Lambda}L_{m, \Omega}$.  This $\mathbf x_g$ is in $M_{n+m, \Xi}$ for some $\Xi \subseteq [n+m]$.  Let $T$ be the power series that begins equal to $L_{n, \Lambda}L_{m, \Omega}$ and from which we add or subtract terms to zero out its coefficients. Assume we have zeroed out the coefficients of all monomials in $M_{n+m, \Psi}$ for all sets $\Psi$ processed before $\Xi$ and added or subtracted terms accordingly from $T$; recall that we process sets in increasing order of size, and that we will later show that the order in which we process sets of the same size does not matter.
We show that it is now possible to zero out $[\mathbf{x}_g](T)$ by adding an integer multiple of $L_{n+m, \Xi}$ to $T$ and that doing so does not affect any monomials we have already zeroed out.

In the example of $L_{2, \{1\}} L_{3, \{2\}}$, we would begin with $T=L_{2, \{1\}} L_{3, \{2\}}$ and process sets in increasing order of size; we would begin with the empty set, then process all six sets of size 1, and then continue to process sets up through the set $\{1,2,3,4,5\}$ of size 5.  Each time we process a set $\Xi$, we would add some multiple $cL_{5, \Xi}$, where $c \in \mathbb Z$, of $L_{5, \Xi}$ from $T$ so that $[\mathbf x_g](T-cL_{5, \Xi})=0$ for any $\mathbf x_g \in M_{5, \Xi}$.  Again, we will later observe that the order in which we process sets of the same size does not matter.

Note that $[\mathbf{x}_g](L_{n, \Lambda}L_{m, \Omega})$ must be divisible by $[\mathbf{x}_g](L_{n+m, \Xi}) = 2^{|\{g_1, g_2, \ldots, g_{n+m}\} \cap \mathbb  N_+|}$ because if two terms $x_{i_1}x_{i_2}\cdots x_{i_n}$ in $L_{n, \Lambda}$ and $x_{i_{n+1}}x_{i_{n+2}}\cdots x_{i_{n+m}}$ in $L_{m, \Omega}$ multiply to $\mathbf{x}_g$, then the exponent of 2 in $[\mathbf{x}_g](L_{n, \Lambda}L_{m, \Omega})$, $|\{i_1, i_2, \ldots, i_{n}\} \cap \mathbb  N_+|+|\{i_{n+1}, i_{n+2}, \ldots, i_{n+m}\} \cap \mathbb  N_+|,$ must be greater than or equal to $|\{g_1, g_2, \ldots, g_{n+m}\} \cap \mathbb  N_+|$ since the multisets $\{g_1, g_2, \ldots, g_{n+m}\}$ and $\{i_1, i_2, \ldots, i_{n+m}\}$ are equal, so
$2^{|\{g_1, g_2, \ldots, g_{n+m}\} \cap \mathbb  N_+|}$ divides $2^{|\{i_1, i_2, \ldots, i_{n}\} \cap \mathbb  N_+|+|\{i_{n+1}, i_{n+2}, \ldots, i_{n+m}\} \cap \mathbb  N_+|}$.  Also note that any term $L_{n+m, \Psi}$ we have already added or subtracted from $T$ that might affect our coefficient of $\mathbf{x}_g$ simply changes $\mathbf{x}_g$'s coefficient by $2^{|\{g_1, g_2, \ldots, g_{n+m}\} \cap \mathbb  N_+|}$.  This shows that $2^{|\{g_1, g_2, \ldots, g_{n+m}\} \cap \mathbb  N_+|}$ divides $[\mathbf{x}_g](T)$, so it is possible to zero out $[\mathbf{x}_g](T)$ by adding or subtracting some multiple of $L_{n+m, \Xi}$ to or from $T$. 

Furthermore, adding or subtracting this multiple of $L_{n+m, \Xi}$ does not affect the coefficients of any monomials in some $M_{n+m, \Psi}$ that has already been processed; this is because an $L_{n, \Lambda}$ contains some element of an $M_{n, \Omega}$ as a monomial term if and only if $\Lambda \subseteq \Omega$ or $L_{n, \Lambda} = L_{n,\Omega}$.  In particular, for any monomial in an $M_{n+m, \Psi}$ for an already-processed $\Psi$, it is not possible for $\Xi \subseteq \Psi$ to be true because $|\Psi| \leq |\Xi|$ (since we process in increasing order of size) and $\Psi \neq \Xi$.  Moreover, if $L_{n+m, \Xi} = L_{n+m, \Psi}$, then $M_{n+m, \Xi} = M_{n+m, \Psi}$ as well, meaning the multiple of $L_{n+m, \Xi}$ we add would simply equal 0 and thus not affect the coefficients of any monomial in $M_{n+m, \Psi}$. Therefore, by going through every subset of $[n+m]$ in increasing order of size, we are able to zero out the coefficient of every $L$-special monomial.

To see this process for the particular product $L_{2, \{1\}} L_{3, \{2\}}$, note that $L_{2, \{1\}} = x_0^2$ and $L_{3, \{2\}} = x_0^3+\sum\limits_{i \in \mathbb N_+} 2x_i^3+x_{\infty}^3$, so the product $L_{2, \{1\}} L_{3, \{2\}}$, and our initial value for $T$, expands to $x_0^5+x_0^2\sum\limits_{i \in \mathbb N_+} 2x_i^3+x_0^2x_{\infty}^3$.  None of the monomials in this sum are members of any $M_{5, \Xi}$'s for $\Xi$'s of size 0 or 1; however, note that all monomials of the form $x_0^2x_i^3$ for $i \in \mathbb N_+$ are members of $M_{5, \{1,4\}}$.  Thus, we subtract $L_{5, \{1,4\}} = x_0^2\sum\limits_{i \in \mathbb N_+} 2x_i^3$ from $T$, making $T = x_0^5+x_0^2x_{\infty}^3$.  Then, because $x_0^5 = L_{5, \{1,2,4\}}=L_{5, \{1,3,4\}}=L_{5, \{1,2,3,4\}}$ and $x_0^2x_{\infty}^3 = L_{5, \{1,4,5\}}$, we can subtract $L_{5, \{1,4,5\}}$ and one of the $L_{5, \Xi}$'s equal to $x_0^5$ from $T$ so that $T=0$, as desired; the order in which we process these sets, and in particular the sets of the same size, does not matter.  Note that all monomials in $L_{2, \{1\}} L_{3, \{2\}}$ are $L$-special, making the next part of our proof unnecessary.

It now remains to show that after zeroing out the coefficients of all $L$-special monomials, any monomial that is not $L$-special also has a coefficient of 0 in $T$.
Consider such a monomial $\mathbf{x}_g=x_{g_1}x_{g_2}\cdots x_{g_{n+m}}$.  
Since $\mathbf x_g$ is not $L$-special, at least one of the following must be true:

\begin{itemize}
    \item The exponent of $x_{g_i}$ in $\mathbf x_g$ is 2 for some $g_i \in \mathbb  N_+$.
    Equivalently, $g_{i-1} \prec g_i = g_{i+1} \prec g_{i+2}$ in $\mathbf x_g$ for some $i \in \{1, 2, \ldots, n-1\}$.
    \item The exponent of either $x_0$ or $x_{\infty}$ in $\mathbf x_g$ is 1.
    Equivalently, either $g_0 = g_1 \prec g_2$ or $g_{n-1} \prec g_n = g_{n+1}$ in $\mathbf x_g$.
\end{itemize}

Let the relations above be the \textit{problematic relations}; the presence of any one of these in $\mathbf x_g$ prevents it from being $L$-special by the definition of $L$-special monomials.
Consider one such problematic relation in $\mathbf x_g$ and let it be the \textit{current problematic relation}.
Note that all of the problematic relations contain exactly one = sign.  Consider a different monomial $\mathbf x_h=x_{h_1}x_{h_2} \cdots x_{h_n}$ 
that is defined with the same relations between consecutive pairs $h_i$ and $h_{i+1}$ as $\mathbf x_g$'s $g_i$ and $g_{i+1}$ but with the current problematic relation resolved by replacing its = sign with a $\prec$ sign\footnote{For example, consider $\mathbf x_g = x_0^2x_1^2x_2$, where $(g_1, g_2, g_3, g_4, g_5) = (0,0,1,1,2).$  The problematic relation is $g_2 \prec g_3 = g_4 \prec g_5$, so a possible monomial $\mathbf x_h$ formed by resolving it might be $x_0^2x_1x_2x_3$, since $0 \prec 1 \prec 2 \prec 3$.}.  We define a formal power series $f$ to have the \textit{spreading condition} if $2[\mathbf x_g](f) = [\mathbf x_h](f)$ for all monomials $\mathbf x_g$ that have a problematic relation and all monomials $\mathbf x_h$ obtained from $\mathbf x_g$ by resolving the problematic relation. We claim that both $L_{n, \Lambda} L_{m, \Omega}$ and all $L_{n+m, \Xi}$'s satisfy the spreading condition.

Before proving the above claim, we first show that proving it would be sufficient to prove Theorem \ref{mainl}.  If $N$ is the number of problematic relations in $\mathbf x_g$, then the coefficient of the monomial formed by resolving every problematic relation in $\mathbf x_g$ would be $2^{N}$ times the coefficient of $\mathbf x_g$ in both $L_{n, \Lambda} L_{m, \Omega}$ and all $L_{n+m, \Xi}$'s.  However, the resulting monomial would be an $L$-special monomial because it would not have any problematic relations, so its coefficient would be 0 after the coefficients of all $L$-special monomials are zeroed out.  This would imply that the coefficient of $\mathbf x_g$ is 0 after all zeroing out is complete, and since this would be true for all non-$L$-special monomials $\mathbf x_g$, proving this claim would be sufficient to complete the proof of Theorem \ref{mainl}.

To prove the claim, we first show that any $L_{n+m, \Xi}$ satisfies the spreading condition.  We can see this is true because by the definition of $L_{n, \Lambda}$, the coefficient of any monomial in $L_{n+m, \Xi}$ is 2 to the power of the number of distinct natural variables in $\Xi$.  Changing an = sign to a $\prec$ sign in any problematic relation increases the number of distinct natural variables by 1, thus doubling the coefficient, which implies that $2[\mathbf x_g](L_{n+m, \Xi}) = [\mathbf x_h](L_{n+m, \Xi})$.

It thus remains to prove that $L_{n, \Lambda} L_{m, \Omega}$ satisfies the spreading condition, or that the analogous equality $2[\mathbf x_g](L_{n, \Lambda} L_{m, \Omega}) = [\mathbf x_h](L_{n, \Lambda} L_{m, \Omega})$ is true.  To do so, we consider two separate cases depending on the type of the current problematic relation in $\mathbf x_g$.

Note that the $\mathbf x_g$ term in $L_{n, \Lambda} L_{m, \Omega}$ is formed by the sum of products of two monomials, one from each of $L_{n, \Lambda}$ and $L_{m, \Omega}$.  In each of these products, each of the single variables $x_{g_1}, x_{g_2}, \ldots, x_{g_{n+m}}$ that multiply to $\mathbf x_g$ must come from one of the two factors $L_{n, \Lambda}$ and $L_{m, \Omega}$.   Consider any pair of monomials $\mathbf x_{g_1}=x_{g_{a_1}} x_{g_{a_2}} \cdots x_{g_{a_n}}$ from $L_{n, \Lambda}$ and $\mathbf x_{g_2} = x_{g_{a_{n+1}}} x_{g_{a_{n+2}}} \cdots x_{g_{a_{n+m}}}$ from $L_{m, \Omega}$ whose product equals $\mathbf x_g$, where $\{ a_1, a_2, \ldots, a_{n+m} \} = [n+m]$, $a_1<a_2< \cdots < a_n$, and $a_{n+1}<a_{n+2} < \cdots < a_{n+m}$.  The coefficient of $\mathbf x_{g_1}$ in $L_{n, \Lambda}$ is $2^{|\{ g_{a_1}, g_{a_2}, \ldots, g_{a_n} \} \cap \mathbb  N_+|}$ and that of $\mathbf x_{g_2}$ in $L_{m, \Omega}$ is $2^{|\{ g_{a_{n+1}}, g_{a_{n+2}}, \ldots, g_{a_{n+m}} \} \cap \mathbb  N_+|}$.  Multiplying these two coefficients gives the product $2^{|\{ g_{a_1}, g_{a_2}, \ldots, g_{a_n} \} \cap \mathbb  N_+|+|\{ g_{a_{n+1}}, g_{a_{n+2}}, \ldots, g_{a_{n+m}} \} \cap \mathbb  N_+|}$; summing over all possible $\mathbf x_{g_1}$ and $\mathbf x_{g_2}$ gives the coefficient $[\mathbf x_g](L_{n, \Lambda}L_{m, \Omega})$ of $\mathbf x_g$ in $L_{n, \Lambda}L_{m, \Omega}$. 

Consider the first type of problematic relation, when $g_{i-1} \prec g_i = g_{i+1} \prec g_{i+2}$ in $\mathbf x_g$ for some $i \in \{1, 2, \ldots, n-1\}$. Note that $x_{g_i}$ and $x_{g_{i+1}}$, which are equal in $\mathbf x_g$, must either come from different factors, one in $L_{n, \Lambda}$ and one in $L_{m, \Omega}$, or from the same factor in $L_{n, \Lambda}$ or $L_{m, \Omega}$.  This means there are three possibilities for which factors $x_{g_i}$ and $x_{g_{i+1}}$ come from: both of $x_{g_i}$ and $x_{g_{i+1}}$ come from $\mathbf x_{g_1}$, both of $x_{g_i}$ and $x_{g_{i+1}}$ come from $\mathbf x_{g_2}$, or one of $x_{g_i}$ and $x_{g_{i+1}}$ comes from each of $\mathbf x_{g_1}$ and $\mathbf x_{g_2}$.

If they come from the same factor, consider the pair of monomials $x_{h_{a_1}} x_{h_{a_2}} \cdots x_{h_{a_n}}$ from $L_{n, \Lambda}$ and $x_{h_{a_{n+1}}} x_{h_{a_{n+2}}} \cdots x_{h_{a_{n+m}}}$ from $L_{m, \Omega}$, whose product equals $\mathbf x_h$.  The coefficient of this product in $L_{n, \Lambda} L_{m, \Omega}$ is $2^{|\{ h_{a_1}, h_{a_2}, \ldots, h_{a_n} \} \cap \mathbb  N_+|+|\{ h_{a_{n+1}}, h_{a_{n+2}}, \ldots, h_{a_{n+m}} \} \cap \mathbb  N_+|}$, which is twice that of the product of $\mathbf x_{g_1}$ and $\mathbf x_{g_2}$ because there is one more distinct value of the $h$'s than the $g$'s because the current problematic relation in $\mathbf x_g$ was resolved to form $\mathbf x_h$.

If they come from different factors, let $a_j = i$ and $a_k=i+1$.  We consider two pairs of monomials whose products are $\mathbf x_h$: firstly $x_{h_{a_1}} x_{h_{a_2}} \cdots x_{h_{a_n}}$ from $L_{n, \Lambda}$ and $x_{h_{a_{n+1}}} x_{h_{a_{n+2}}} \cdots x_{h_{a_{n+m}}}$ from $L_{m, \Omega}$, and secondly the same pair but with $a_j$ and $a_k$ swapped.  The coefficients of the products of both of these monomials in $L_{n, \Lambda} L_{m, \Omega}$ are equal to that of the product of $\mathbf x_{g_1}$ and $\mathbf x_{g_2}$ because $x_{g_i}$ and $x_{g_{i+1}}$ are not part of the same factor and hence are counted twice in the exponent.  However, there are two monomials to consider, so the total sum of their coefficients is twice that of the product of $\mathbf x_{g_1}$ and $\mathbf x_{g_2}$.  

Now consider the second type of problematic relation, when either $g_0 = g_1 \prec g_2$ or $g_{n-1} \prec g_n = g_{n+1}$ in $\mathbf x_g$.  In either case, consider the pair of monomials $x_{h_{a_1}} x_{h_{a_2}} \cdots x_{h_{a_n}}$ from $L_{n, \Lambda}$ and $x_{h_{a_{n+1}}} x_{h_{a_{n+2}}} \cdots x_{h_{a_{n+m}}}$ from $L_{m, \Omega}$, whose product equals $\mathbf x_h$.  The coefficient of this product in $L_{n, \Lambda} L_{m, \Omega}$ is $2^{|\{ h_{a_1}, h_{a_2}, \ldots, h_{a_n} \} \cap \mathbb  N_+|+|\{ h_{a_{n+1}}, h_{a_{n+2}}, \ldots, h_{a_{n+m}} \} \cap \mathbb  N_+|}$, which is twice that of the product of $\mathbf x_{g_1}$ and $\mathbf x_{g_2}$ because the only difference between the $x_{g_a}$'s and the $x_{h_a}$'s is that a bordering variable is replaced with a natural variable.  

Since for both types of problematic relations the coefficient of $\mathbf x_h$ is double that of $\mathbf x_g$ in $L_{n, \Lambda} L_{m, \Omega}$, summing over all pairs of monomials multiplying to $\mathbf x_g$ implies that the total coefficient of $\mathbf x_h$ is double that of $\mathbf x_g$, or that $2[\mathbf x_g](L_{n, \Lambda} L_{m, \Omega}) = [\mathbf x_h](L_{n, \Lambda} L_{m, \Omega})$, as desired.  This completes our proof of Theorem \ref{mainl} and, equivalently, Theorem \ref{main}.
\end{proof}

Though the choice of 2 as the base of the exponent in $K_{n, \Lambda}$ might seem somewhat arbitrary, there are actually no other nonzero bases for which the span of the $K_{n, \Lambda}$'s would form an algebra.  We provide a brief proof of this below.

\begin{proof}
Consider an analogous definition of the $K_{n, \Lambda}$'s but with the coefficients' base of 2 replaced by an arbitrary, nonzero constant $q$:

\begin{equation}
     K_{n, \Lambda}=\hspace{-2 em}\sum\limits_{\substack{(g_1,g_2,\ldots,g_n) \in \mathcal{N}^n;\\0 \preceq g_1 \preceq g_2 \preceq \cdots \preceq g_n \preceq \infty;\\ \text{no } i \in \Lambda \text{ satisfies } g_{i-1}=g_i=g_{i+1}\\(\text{where } g_0=0\text{ and } g_{n+1}=\infty)}}\hspace{-2 em} q^{|\{g_1,g_2,\ldots,g_n\} \cap \mathbb{N}|}x_{g_1}x_{g_2}\cdots x_{g_n}.
\end{equation}
Now, consider the product \begin{equation}
    K_{1, \{ \}}^2 = \left (x_0+\sum\limits_{i \in \mathbb N} qx_i + x_{\infty} \right )^2.
\end{equation}
The coefficient of $x_i^2$ for any $i \in \mathbb N$ is $q^2$ and the coefficient of $x_0x_i$ for any $i \in \mathbb N$ is $2q$ in this formal power series.  However, the coefficients of $x_i^2$ and $x_0x_i$ are both $q$ in any $K_{2, \Lambda}$.  This implies that a $\mathbb Z$-linear combination of $K_{2, \Lambda}$'s will have the same coefficient of $x_i^2$ and $x_0x_i$, so for $K_{1, \{ \}}^2$ to be expressible as such a linear combination $q^2$ must equal $2q$, implying $q=2$.  This proves that the base of 2 is necessary for the $K_{n, \Lambda}$'s to satisfy Theorem \ref{main}.
\end{proof}

\section{Formula for  \texorpdfstring{$K_{1, \Lambda}K_{m, \Omega}$}{} in terms of  \texorpdfstring{$K_{m+1, \Xi}$}{}'s}

We now present a direct formula for the product $K_{1, \Lambda}K_{m, \Omega}$ as a sum of $K_{m+1, \Xi}$'s.  To do so, we generalize the notions of peak sets and permutation shuffles provided in \cite{grinberg2018} for use outside of the context of permutations.

We begin by defining a way to write subsets of $[n]$ as strings of length $n$ that help us describe some of our formulas.

\begin{definition}\label{StringSets}Let $n$ be a nonnegative integer and let $(X,Y)$ be an ordered pair of distinct letters.  Given any subset $\Lambda$ of $[n]$, we define the \textit{string form} of $\Lambda$ as an $n$-letter string $s$ consisting of only $X$'s and $Y$'s such that the $i$th character of $s$ is $X$ if $i \in \Lambda$ and is $Y$ otherwise. 
\end{definition}

For example, if $n=5$, the string form of the set $\{1, 4, 5 \}$ is $XYYXX$.  For the rest of this paper, we always choose our ordered pairs of letters to be $(A,B)$ and $(C,D)$ for the sake of simplicity. We do not need to refer to more than two subsets simultaneously.

We now define the notion of a shuffle of two strings, generalizing the notion of shuffles of two permutations provided in \cite{grinberg2018}.

\begin{definition}\label{StringShuffles}
Let $n$ and $m$ be nonnegative integers, $\Lambda \subseteq [n]$, and $\Omega \subseteq [m]$.  If $z_1$ is the string form of $\Lambda$ with letters $(A,B)$ and $z_2$ is the string form of $\Omega$ with letters $(C,D)$, then we define a string $z_3$ of length $n+m$ consisting of the four letters $A,B,C,$ and $D$ to be a \textit{shuffle} of $\Lambda$ and $\Omega$ if $z_1$ and $z_2$ both occur as disjoint but not necessarily contiguous subsequences of $z_3$.
\end{definition}

For example, if $n=2$, $m=3$, $\Lambda = \{1\}$, and $\Omega = \{2,3\}$, then there are ten shuffles of $\Lambda$ and $\Omega$, such as $ABDCC$ and $DCACB$.  
Note that all subsequences of these shuffles consisting of the letters $A$ and $B$ are of the form $AB$ and all subsequences of these shuffles consisting of the letters $C$ and $D$ are of the form $DCC$.

We now introduce a more convenient notation to express the set of all shuffles of two given sets.

\begin{definition}\label{SetofShuffles}
Denote the set of shuffles of two subsets $\Lambda$ and $\Omega$ of $[n]$ and $[m]$ respectively by $S(n, \Lambda, m, \Omega)$.  It is not difficult to see that the size of $S(n, \Lambda, m, \Omega)$ is $\binom{n+m}{n}$.
\end{definition}

We now impose an ordering $A>B>C>D$ on the letters $A,B,C,$ and $D$ of $S(n, \Lambda, m, \Omega)$.  This ordering allows us to identify each shuffle with a subset of $[n+m]$ by generalizing the notion of a peak set presented in \cite{grinberg2018}, which is the crux of the formula for $K_{1, \Lambda}K_{m, \Omega}$.  

\begin{definition}\label{GPS}
Denote by $\text{Gp}(s)$ the \textit{generalized peak set corresponding to }$s$, which for a string $s$ with letters from the set $\{A, B, C, D\}$ is the set of positions in $s$ satisfying the following properties:
\begin{itemize}
    \item The letters at those positions are not the smallest letter under the ordering $A>B>C>D$, namely the letter $D$.
    \item The letters at those positions are greater or equal to all of their neighbors.
\end{itemize}
\end{definition}

As an example, the set corresponding to the string $BCACDD$ is $\{1,3\}$ under the ordering $A>B>C>D$.  Positions 1 and 3 satisfy both properties necessary for inclusion in the generalized peak set corresponding to $BCACDD$, since $A,B \geq C$.  Note that while position 6 satisfies the second property since $D \geq D$, it does not satisfy the first, so position 6 is not included in the set.

We are now ready to present a formula for $K_{1, \{ \}}K_{m, \Omega}$, where $\{ \}$ denotes the empty set.

\begin{theorem}\label{generalizedpeakshuffletheorem}
Let $m$ be a nonnegative integer and $\Omega \subseteq [m]$.  The product $K_{1, \{ \}}K_{m, \Omega}$ is equal to the sum of the terms of the form $K_{m+1, \Lambda}$, where $\Lambda$ is the generalized peak set of each shuffle in $S(1, \{ \}, m, \Omega)$ under the ordering $A>B>C>D$.  This formula is denoted as a sum by 
\begin{equation}\label{formula}
    K_{1, \{ \}}K_{m, \Omega}=\sum\limits_{s \in S(1, \{ \}, m, \Omega)} K_{m+1, \text{Gp}(s)}.
\end{equation}
\end{theorem}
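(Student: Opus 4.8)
\textbf{Proof proposal for Theorem~\ref{generalizedpeakshuffletheorem}.}

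The plan is to prove the identity by expanding both sides into their defining sums over tuples in $\mathcal N^{m+1}$ and exhibiting a coefficient-preserving bijection between the monomial contributions. First I would write $K_{1,\{\}} = x_0 + \sum_{i\in\mathbb N_+} 2x_i + x_\infty$, so that $K_{1,\{\}}K_{m,\Omega}$ is a sum, over a choice of one variable $x_t$ (with $t\in\mathcal N$) and a defining tuple $(g_1,\dots,g_m)$ for $K_{m,\Omega}$, of the monomial $x_t x_{g_1}\cdots x_{g_m}$ weighted by $2^{[t\in\mathbb N_+]}\cdot 2^{|\{g_1,\dots,g_m\}\cap\mathbb N_+|}$. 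On the right side, each $K_{m+1,\mathrm{Gp}(s)}$ is a sum over tuples $(h_1,\dots,h_{m+1})$ with $0\preceq h_1\preceq\cdots\preceq h_{m+1}\preceq\infty$ avoiding the forced-equality pattern at every index of $\mathrm{Gp}(s)$, weighted by $2^{|\{h_1,\dots,h_{m+1}\}\cap\mathbb N_+|}$. The target is to show: for each fixed nondecreasing tuple $\mathbf h=(h_1,\dots,h_{m+1})$, the number of shuffles $s\in S(1,\{\},m,\Omega)$ for which $\mathbf h$ satisfies the $K_{m+1,\mathrm{Gp}(s)}$ constraints, counted with the $2$-power weights, equals the number of ways to write the corresponding monomial as $x_t\cdot(x_{g_1}\cdots x_{g_m})$ coming from $K_{1,\{\}}K_{m,\Omega}$, again with weights.

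The core combinatorial step is the following dictionary. A shuffle $s$ of the one-letter string of $\Lambda=\{\}$ (which is just the single letter $B$) into the string form of $\Omega$ (letters $C,D$) is determined by the position $j\in\{1,\dots,m+1\}$ at which the lone $B$ is inserted; there are $\binom{m+1}{1}=m+1$ such shuffles, matching $|S(1,\{\},m,\Omega)|$. Inserting $B$ at slot $j$ corresponds, on the monomial side, to designating $h_j$ as the variable $x_t$ contributed by the $K_{1,\{\}}$ factor, with $(h_1,\dots,\widehat{h_j},\dots,h_{m+1})$ playing the role of $(g_1,\dots,g_m)$. So the right-to-left correspondence is transparent once one checks two things: (i) the generalized peak set $\mathrm{Gp}(s_j)$, computed from the string with $B$ in slot $j$ and $C$'s/$D$'s recording $\Omega$, is exactly the set of indices $i\in[m+1]$ at which a defining tuple $\mathbf h$ for $K_{m+1,\mathrm{Gp}(s_j)}$ must \emph{not} have $h_{i-1}=h_i=h_{i+1}$; and (ii) translating the $K_{m,\Omega}$-admissibility of $(g_1,\dots,g_m)$ through the deletion of slot $j$ produces precisely the complementary no-triple-equality condition at $\mathrm{Gp}(s_j)$. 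This is where the ordering $A>B>C>D$ and the two clauses of Definition~\ref{GPS} (not the minimal letter; $\geq$ both neighbors) do their work: a position carrying $C$ is a generalized peak iff both its neighbors are $C$ or $D$, i.e.\ iff the corresponding index is in $\Omega$ and is not adjacent to the inserted $B$ on the ``wrong'' side; a position carrying $B$ is a generalized peak iff both neighbors are $C$ or $D$ (always true since there is only one $B$), matching the fact that the inserted variable $x_t$ forces an equality constraint with its neighbors exactly when neither neighbor index lies in $\Omega$. I would organize this as a case analysis on whether slot $j$ is at the boundary ($j=1$ or $j=m+1$, so $x_t$ borders $g_0=0$ or $g_{m+1}=\infty$) or interior, and within the interior case on whether $j-1$ or $j$ (as indices of $\Omega\subseteq[m]$) lie in $\Omega$.

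Finally I would verify the $2$-power weights match. On the left, the weight of the pair $(t,\mathbf g)$ is $2^{|\{t,g_1,\dots,g_m\}\cap\mathbb N_+|}$ \emph{unless} $t$ is a natural number already appearing among the $g$'s, in which case the left weight is $2\cdot 2^{|\{g_1,\dots,g_m\}\cap\mathbb N_+|} = 2^{1+|\{g_1,\dots,g_m\}\cap\mathbb N_+|}$, which exceeds $2^{|\{h_1,\dots,h_{m+1}\}\cap\mathbb N_+|}$ by a factor of $2$. The resolution is that such a repeated-natural-variable configuration corresponds to a monomial $\mathbf h$ in which $h_{j}$ equals one of its neighbors while that neighbor is itself a natural variable repeated — and one checks that in every such case there are exactly \emph{two} slots $j$ and $j'$ (the two copies of that natural variable adjacent to each other) giving the same monomial, so the two single contributions of weight $2^{|\{h\}\cap\mathbb N_+|}$ on the right sum to the weight $2^{1+\cdots}$ demanded on the left; conversely, when $x_t$ is a bordering variable or a fresh natural variable, the slot $j$ is uniquely determined. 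Matching these multiplicities across the bijection of the previous paragraph completes the proof. The main obstacle I anticipate is bookkeeping clause (i)--(ii) above cleanly — precisely relating ``$B$ inserted at slot $j$, then read off $\mathrm{Gp}$'' to ``delete slot $j$, check $K_{m,\Omega}$-admissibility'' — because the index shift when passing from $[m]$ to $[m+1]$ interacts subtly with the neighbor conditions at the insertion point; everything else is routine weight-counting.
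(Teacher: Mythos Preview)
Your approach is essentially the paper's own: fix a monomial, identify each shuffle with the position $j$ of the lone inserted $B$, and match contributions on the two sides by casework on whether $h_j$ is a bordering or natural variable and on the multiplicity of $h_j$ in $\mathbf h$ (yielding the $M$, $M$, $2M$ contributions exactly as in the appendix). One small correction to your claim about $\mathrm{Gp}(s_j)$: the position $j$ is \emph{always} in $\mathrm{Gp}(s_j)$ (since $B>C,D$, independent of $\Omega$), so the admissibility of $\mathbf h$ for $K_{m+1,\mathrm{Gp}(s_j)}$ is equivalent to the admissibility of the deleted tuple $\mathbf g$ for $K_{m,\Omega}$ \emph{together with} the extra condition $\lnot(h_{j-1}=h_j=h_{j+1})$ --- and it is precisely this extra condition that selects the one or two admissible slots per value and makes your weight count go through.
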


For example, the product $K_{1, \{ \}}K_{5, \{1,2,4 \}}$ can be written as $K_{6,\{2,5\}}+K_{6,\{1,2,4\}}+K_{6,\{1,2,5\}}+K_{6,\{1,3,5\}}+K_{6,\{1,3,5\}}+K_{6,\{1,2,4,6\}}$.  This is consistent with (\ref{formula}), which we can verify by finding the generalized peak set of each shuffle of $S(1, \{ \},5, \{1,2,4\})$ under the ordering $A>B>C>D$.  The proof of this theorem is both highly technical and not very enlightening, so we save it for Appendix \ref{apdx}.

Note that $K_{1, \{ \}} = K_{1, \{ 1\}}$ by Definition \ref{KnLam} since there are no restrictions on $g_1$ for either of the sets $\{ \}$ and $\{ 1\}$. Thus, the above formula holds for $K_{1, \{1\}}$ in place of $K_{1, \{ \}}$ as well, so we have found a formula that works for all $K_{1, \Lambda}K_{m, \Omega}$.

\begin{question}
Can we use the notion of the generalized peak set to find a formula for all products $K_{n, \Lambda}K_{m, \Omega}$? 
\end{question}
Such a formula has already been found for special cases such as when $\Lambda$ and $\Omega$ are exterior peak sets \cite{grinberg2018}.  Furthermore, some testing with SageMath \cite{sage} has hinted at some potentially useful directions of study in this area. For example, the products $K_{2, \{ \} }K_{2, \{ 1 \}}$ and $K_{2, \{ 1 \}}K_{3, \{ 1,2 \}}$ can be written as sums similar to that of (\ref{formula}), and some recursive patterns for products of the form $K_{n, \{ \}}K_{m, \Omega}$ have been identified, suggesting that a general formula may require a complete characterization and solution to similar recurrences.

One large difficulty in finding a general formula for any $K_{n,\Lambda} K_{m, \Omega}$, in comparison with products of the form $K_{n, \{ \}}K_{m, \Omega}$, arises due to the additional parameter $\Lambda$.  This would require a formula that would account for the many varying possibilities for both sets $\Lambda$ and $\Omega$ in contrast to the progress we have made for the fixed $\Lambda = \{ \}$.  In addition, another complication for determining the general product $K_{n,\Lambda} K_{m, \Omega}$ lies in the fact that simply summing $K_{n+m,\Xi}$ terms may not be sufficient to produce a correct equation.  
For example, the product $K_{2, \{ \}}K_{2, \{1,2\}}$ cannot be written as the sum of all positive $K_{4, \Xi}$'s, but it can be written as
\begin{equation}K_{2, \{ \}}K_{2, \{1,2\}}=K_{4, \{1\} }+K_{4, \{2\} }+K_{4, \{3\} }+K_{4, \{4\} }+K_{4, \{1,3\} }+K_{4, \{1,4\} }+K_{4, \{2,4\} }-K_{4, \{ \} }.
\end{equation}
It would be interesting if such a formula could be found, since it could possibly reveal more about the combinatorial structure of the $K_{n,\Lambda}$'s and possible connections to other areas.

\section{Conclusion}

In this paper, we have studied the properties of a family of formal power series $K_{n,\Lambda}$.
Our most important result shows that the span of these series is a $\mathbb Z$-subalgebra of the space $\mathbb{Z} [[ x_0, x_1, x_2, \ldots, x_{\infty}]]$.
This shows that the answer to Question 2.51 in \cite{grinberg2018} and Question 4.6 in \cite{grinberg2020} is indeed that the statements are true.
Additionally, we have derived a formula to directly find the product $K_{1, \Lambda} K_{m, \Omega}$, and preliminary work shows that similar formulas might exist for other products $K_{n, \Lambda} K_{m, \Omega}$ for $n, m >1$.

Furthermore, we believe that the relations between the $K_{n,\Lambda}$'s and the $L_{n,\Lambda}$'s form useful tools that allowed us to find important properties of those functions and, as such, an algorithm to prove they formed algebras.
We believe that our results and the techniques used in our proof are well-suited to solving similar problems and also have potential future applications in other areas.  In particular, this problem arose from the study of permutation statistics' shuffle-compatibility, and the formula we provide for $K_{1, \Lambda}K_{m, \Omega}$ bears some semblance to the Pieri rule for Schur functions, so it would be interesting to further explore such connections to our work or possible conditions that produce other subalgebras of the ring of formal power series.

\section{Acknowledgements}
The authors would like to thank Professor Darij Grinberg for proposing the research question and his suggestions, advice, and explanations.
The authors would also like to thank the MIT PRIMES-USA program for making this project possible.

\appendix
\section{Proof of formula for \texorpdfstring{$K_{1, \Lambda}K_{m, \Omega}$}{}}\label{apdx}
We prove that for any nonnegative integer $n$ and $\Lambda \in [n]$, 
\begin{equation}
   K_{1, \{ \}}K_{n, \Lambda}=\sum\limits_{s \in S(1, \{ \}, n, \Lambda)} K_{n+1, \text{Gp}(s)}. 
\end{equation}
As such, we prove the equivalent Theorem \ref{generalizedpeakshuffletheorem}, with $n$ and $\Lambda$ replaced by $m$ and $\Omega$ respectively.

\begin{proof}[Proof of Theorem \ref{generalizedpeakshuffletheorem}]
Consider any monomial of the form $\mathbf{x}_g$, where we let $\mathbf{x}_g$ denote the product $x_{g_1}x_{g_2}\cdots x_{g_{n+1}}$ for some $(g_1,g_2,\ldots,g_{n+1}) \in  \mathcal{N}^{n+1}$.  We will show that the coefficient of this monomial is the same for both $\sum\limits_{s \in S(1, \{ \}, n, \Lambda)} K_{n+1, \text{Gp}(s)}$.  For the rest of this proof, let $$M=2^{|\{g_1, g_2, \ldots, g_{n+1}\} \cap \mathbb  N_+|}$$ and let $\mathbf{x}_{g_i}$ denote the exponent of $x_{g_i}$ in $\mathbf{x}_g$; for example, if $\mathbf{x}_g$ is the monomial $x_0^2x_2x_5^3$, then $\mathbf{x}_{g_0}=2$, $\mathbf{x}_{g_1}=0$, and $\mathbf{x}_{g_5}=3$. 

We first find the coefficient of $\mathbf{x}_g$ in $K_{1, \{ \}} K_{n, \Lambda}$, which is equal to $\left(x_0+\sum\limits_{i \in \mathbb  N_+ }2x_i+x_{\infty}\right)(K_{n, \Lambda}).$  We will show that this coefficient is equal to $y_0+\sum\limits_{i \in \mathbb  N_+} y_i + y_{\infty}$, where 
$$y_i=\begin{cases}
    0, & \text{if } \mathbf{x}_{g_i}=0 \text{ or } \frac{\mathbf{x}_g}{x_i} \text{ is not a monomial term in } K_{n, \Lambda} \\
    M, & \text{otherwise}
\end{cases}$$
for $i \in \{ 0, \infty \}$, and 
$$y_i=\begin{cases}
    0, & \text{if } \mathbf{x}_{g_i}=0 \text{ or } \frac{\mathbf{x}_g}{x_i} \text{ is not a monomial term in } K_{n, \Lambda}\\
    M, & \text{if } \mathbf{x}_{g_i}=1 \text{ and } \frac{\mathbf{x}_g}{x_i} \text{ is a monomial term in } K_{n, \Lambda}\\
    2M, & \text{otherwise}
\end{cases}$$
for $i \in \mathbb  N_+$.

Consider each monomial term in the expansion of $K_{n, \Lambda}$; by the definition of $K_{n, \Lambda}$, all monomials in this expansion are of degree $n$.  The product of such a monomial with any term from $K_{1, \{ \}}$, all of which are single degree, can be of the form $\mathbf{x}_g$ only if the monomial is of the form $\frac{\mathbf{x}_g}{x_i}$ for some $i \in \mathcal{N}$ such that $x_i$ has a positive exponent in $\mathbf{x}_g$; all such $x_i$ exist in $K_{1, \{ \}}$.
We will split our proof into cases based on the value of $i$, the terms of $K_{n, \Lambda}$, and the value of $\mathbf{x}_{g_i}$.

\begin{itemize}
    \item Case 1: $\mathbf{x}_{g_i}=0$.  In this case, it is not possible for $x_i$ and a monomial term of $K_{n, \Lambda}$, which specifically must be $\frac{\mathbf{x}_g}{x_i}$, to multiply to $\mathbf{x}_g$; if it were possible, $\frac{\mathbf{x}_g}{x_i}$ being a polynomial would imply that the exponent of $x_i$ in $\mathbf{x}_g$ is at least 1, contradicting $\mathbf{x}_{g_i}=0$.  Therefore, we add 0 to our total coefficient of $\mathbf{x}_g$ for all $i$ such that $\mathbf{x}_{g_i}=0$, in accordance with our case of $y_i=0$.  Then for the following cases, assume $\mathbf{x}_{g_i}>0$.
    \item Case 2: $\frac{\mathbf{x}_g}{x_i} \text{ is not a monomial term in } K_{n, \Lambda}$.  In this case, $x_i$ cannot multiply with any monomial in $K_{n, \Lambda}$ to equal $\mathbf{x}_g$, so we add $0$ to our total coefficient of $\mathbf{x}_g$ in this case as well.  Then for the following cases, assume that $\frac{\mathbf{x}_g}{x_i} \text{ is a monomial term in } K_{n, \Lambda}$.
    \item Case 3: $i \in \{0, \infty \}$.  The coefficient of $x_i$ in $K_{1, \{ \}}$ is 1 and the coefficient of $\frac{\mathbf{x}_g}{x_i}$ in $K_{n, \Lambda}$ is $M$.  Therefore, for both possible values of $i$, we add $M$ to the total coefficient of $\mathbf{x}_g$.
    \item Case 4: $i \in \mathbb  N_+$.  Here, we split into two subcases:
    \begin{itemize}
        \item Subcase 1: $\mathbf{x}_{g_i}=1$.  In this subcase, the coefficient of $x_i$ in $K_{1, \{ \}}$ is 2 and the coefficient of $\frac{\mathbf{x}_g}{x_i}$ in $K_{n, \Lambda}$ is $\frac{M}{2}$.  Therefore, we add $2 \frac{M}{2}=M$ to the total coefficient of $\mathbf{x}_g$.
        \item Subcase 2: $\mathbf{x}_{g_i}>1$.  In this subcase, the coefficient of $x_i$ in $K_{1, \{ \}}$ is 2 and the coefficient of $\mathbf{x}_g$ in $K_{n, \Lambda}$ is $M$.  Therefore, this subcase adds $2M$ to the total coefficient of $\mathbf{x}_g$.
    \end{itemize}
\end{itemize}
Note that all of these values are in accordance with our definitions of the various $y_i$, so it remains to show that the coefficient of $\mathbf{x}_g$ in $\sum\limits_{s \in S(1, \{ \}, n, \Lambda)} K_{n+1, \text{Gp}(s)}$ is also the same value $y_0+\sum\limits_{i \in \mathbb  N_+} y_i + y_{\infty}$.  By the definition of $K_{n, \Lambda}$, the subscripts ($g_1, g_2, \ldots, g_{n+1}) \in \mathcal{N}^{n+1}$ of the monomials $x_{g_1}x_{g_2}\cdots x_{g_{n+1}}$ of our power series $K_{n+1, \{ \text{Gp}(s)\} }$ that sum to $\sum\limits_{s \in S(1, \{ \}, n, \Lambda)} K_{n+1, \text{Gp}(s)}$ must be nondecreasing, so there is exactly one way to assign these $g_i$ to match up with the monomial $\mathbf{x}_g$; precisely, the $g_i$ must be in nondecreasing order.

With this fixed ordering of our $g_i$ in $\mathbf{x}_g$ under consideration, we will find the coefficient of $\mathbf{x}_g$ in each $K_{n+1, \text{Gp}(s)}$ by doing casework on the value of $g_k$ in $\mathbf{x}_g$, where $k$ is the position of $B$ in the shuffle $s$, as well as the value of $\mathbf{x}_{g_k}$.  Note that it is impossible for $\mathbf{x}_{g_k}$ to equal 0 because $g_k$ is guaranteed to be in $\mathbf{x}_g$, so we only consider $\mathbf{x}_{g_k} \geq 1$. 
\begin{itemize}
\item Case 1: $\frac{\mathbf{x}_g}{x_k} \text{ is not a monomial term in } K_{n, \Lambda}$.  This implies that there exists some $z \in \Lambda$ such that $g_{z-1}=g_z=g_{z+1}$.  We will show that in this case, $\mathbf{x}_g$ is not a monomial term in $K_{n+1, \text{Gp}(s)}$ by splitting into two subcases based on whether at least one of $g_{k-1}$ and $g_{k+1}$ is not equal to $g_k$.
    \begin{itemize}
        \item Subcase 1: $g_{k-1}=g_k=g_{k+1}$.  In this case, since $B$ is at position $k$ of the shuffle $s$ and $B$ is higher than $C$ and $D$ in the ordering $ABCD$ (recall that we previously defined the ordering $ABCD$ as $A>B>C>D$), by Definition \ref{GPS} $k$ must be in $\text{Gp} (s)$.  Then, the equality $g_{k-1}=g_k=g_{k+1}$ is not possible for any monomial term of $K_{n+1, \text{Gp}(s)}$, which means $\mathbf{x}_g$ is not a monomial term of $K_{n+1, \text{Gp}(s)}$.
        \item Subcase 2: $g_{k+c} \neq g_k$ for some $c \in \{ -1, 1\}$.  This means $g_{k-c} \neq g_{k+c}$, and thus $g_{k-1} \neq g_{k+1}$, since the $g_i$ are in nondecreasing order.
        Assume for the sake of contradiction that $\mathbf{x}_g$ is a monomial term of $K_{n+1, \text{Gp}(s)}$; this implies that there exists no $p \in \text{Gp}(s)$ such that $g_{p-1}=g_p=g_{p+1}$.  Consider the string form $s_{\Lambda}$ of $\Lambda$ compared to the string $s$; note that removing the $B$ from the $k$th position of $s$ leaves exactly $s_{\Lambda}$ by the definition of a shuffle.  Thus, $\text{Gp}(s_{\Lambda})$ must be the same as $\text{Gp}(s)$ but with the $k$ removed, the positions of any $C$s adjacent to the $B$ in $s$ added (because of the definition of a generalized peak set), and all positions past $k$ lowered by 1.  Note that this removal cannot result in any element $e \in \Lambda$ satisfying $g_{e-1}=g_e=g_{e+1}$ because the only positions that could be added to $\Lambda$ are those adjacent to the removed $B$ at position $k$, but we have already shown that $g_{k-1} \neq g_{k+1}$ in $K_{n+1, \text{Gp}(s)}$.  This contradicts the fact that there exists some $z \in \Lambda$ such that $g_{z-1}=g_z=g_{z+1}$, so  $\mathbf{x}_g$ cannot be a monomial term of $K_{n+1, \text{Gp}(s)}$.
\end{itemize}
    Therefore, we add 0 to our total coefficient of $\mathbf{x}_g$ in this case as well. Then for the following cases, assume that $\frac{\mathbf{x}_g}{x_i} \text{ is a monomial term in } K_{n, \Lambda}$.
    \item Case 2: $g_k=0$ (or $g_k=\infty$).  Then, $g_{k-1}$ (or $g_{k+1}$) must be 0 (or $\infty$) as well, so $g_{k+1}$ (or $g_{k-1}$) cannot also be 0 (or $\infty$) or that would violate the definition of $K_{n+1, \text{Gp}(s)}$.  These possibilities are only possible for one value of $k$; namely, the unique $k$ such that $g_{k+1} \neq 0$ (or $g_{k-1} \neq \infty)$, so this case will add one $M$ to the total coefficient of $\mathbf{x}_g$. 
    \item Case 3: $g_k \in \mathbb  N_+$.  Here, we split into two subcases:
    \begin{itemize}
        \item Subcase 1: $\mathbf{x}_{g_k}=1$.  Then, $g_k$ is unique in $g_1, g_2, \ldots, g_{n+1}$, so there is only one $k$ corresponding to this value of $g_k$, for which we add one $M$ to our total coefficient of $\mathbf{x}_g$.
        \item Subcase 2: $\mathbf{x}_{g_k}>1$.  Then, either $x_{g_{k+1}}$ or $x_{g_{k-1}}$ must be equal to $x_{g_k}$, so if the other is as well then that violates the definition of $K_{n+1, \text{Gp}(s)}$.  There is exactly one value of $k$ for which $x_{g_k}=x_{g_{k+1}}$ and $x_{g_k}\neq x_{g_{k-1}}$ and exactly one value of $k$ for which $x_{g_k}=x_{g_{k-1}}$ and $x_{g_k}\neq x_{g_{k+1}}$.  This gives us two values of $k$ for which $K_{n+1, \text{Gp}(s)}$ contains $\mathbf{x}_g$ as a monomial term, so we add $2M$ to the total coefficient of $\mathbf{x}_g$.
    \end{itemize}
\end{itemize}
Thus, the values we add for $\sum\limits_{s \in S(1, \{ \}, n, \Lambda)} K_{n+1, \text{Gp}(s)}$ correspond exactly to the values of $y_i$ for $K_{1, \{ \}} K_{n, \Lambda}$.  This means that the coefficients of $\mathbf{x}_g$ must be equal for both sides of our equality, and since this is true for all degree $n+1$ monomials $\mathbf{x}_g$, our proof is complete.
\end{proof}
\end{document}